\DeclareRobustCommand{\stirling}{\genfrac\{\}{0pt}{}}
\newtheorem{theorem}{Theorem}
\newtheorem{lemma}[theorem]{Lemma}
\newtheorem{cor}[theorem]{Corollary}
\theoremstyle{remark}
\author{Will Sawin}
\title[Lower bounds for multicolor Ramsey numbers]{An improved lower bound for multicolor Ramsey numbers and a problem of Erd\H{o}s}
\begin{document}

\maketitle

\begin{abstract} The multicolor Ramsey number problem asks, for each pair of natural numbers $\ell$ and $t$, for the largest $\ell$-coloring of a complete graph with no monochromatic clique of size $t$. Recent works of Conlon-Ferber and Wigderson have improved the longstanding lower bound for this problem. We make a further improvement by replacing an explicit graph appearing in their constructions by a random graph. Graphs useful for this construction are exactly those relevant for a problem of Erd\H{o}s on graphs with no large cliques and few large independent sets. We also make some basic observations about this problem.\end{abstract}

Let $r(t;\ell)$ be the multicolor Ramsey number, that is, the minimum $r$ such that every $\ell$-coloring of the edges of the complete graph $K_r$ on $r$ vertices has a monochromatic clique of size $t$.

Building on a breakthrough of Conlon and Ferber \cite{ConlonFerber}, Wigderson \cite{Wigderson} proved the lower bound
\[ r(t; \ell) \geq  2^{  \frac{ 3 (\ell-2)  t}{8} +  \frac{t }{2} + o(t)  }   .\]

Wigderson's variant of Conlon and Ferber's construction involves an explicit graph constructed from linear algebra over $\mathbb F_2$, a series of $\ell-2$ random maps from the vertex set of $K_r$ to the vertex set of this graph, and a random coloring with the last two colors of those edges of $K_r$ not sent to edges of this graph by any of these maps. 

We give the slight improvement
\[ r(t,\ell) \geq 2^{ .383796 (\ell-2) t+  \frac{t}{2} + o(t)} ,\] with the exponent $.383796$ improving over $\frac{3}{8} =.375$. We obtain this by replacing the explicit linear algebraic graph with an Erd\H{o}s-R\'enyi random graph, with the probability that each edge lies in the graph slightly less than $1/2$. (If we took the probability to be exactly $1/2$, we would recover Wigderson's bound \cite{Wigderson}). Thus our argument follows, in part, the suggestion of Alon and R\"odl \cite[p. 140]{AlonRodl} to combine random homomorphisms with random graphs for the multicolor Ramsey number problem.

We further analyze this approach to multicolor Ramsey numbers by relating it to a 1962 problem of Erd\H{o}s, which asks for the asymptotics in $n$ of the minimum number $f(n,s,t)$ of independent sets of size $s$ in a graph $G$ with $n$ vertices that contains no clique of size $t$.  This problem is approximately equidistant between the classical two-color Ramsey problem, which asks for the largest graph with no independent set of size $t$ and no clique of size $s$, and the Ramsey multiplicity problem (\cite{ErdosComplete}, see also \cite[\S2.6]{ConlonFoxSudakov}), which asks us to minimize the total number of independent sets of size $t$ and cliques of size $t$. 

The following is an equivalent formulation: For natural numbers $s$ and $t$, define  $c_{s,t} $ to be the infimum over graphs $G$ with no clique of size $t$ of the probability that, for $v_1,\dots, v_s$ vertices of $G$ chosen independently and uniformly at random, $\{v_1,\dots, v_s \}$ is an independent set.

By the blow-up construction first applied to this problem by Nikiforov \cite[\S2]{Nikiforov}, it is straightforward to check that $c_{s,t} = \lim_{n \to \infty} f(n,s,t)/ \binom{n}{s}$, but this formulation will be more convenient for the application to multicolored Ramsey problems.

%The advantage of choosing $v_1,\dots, v_s$ independently, which incorporates independent sets of size $<s$ in the definition of $c_{s,t}$, is that it avoids the need to consider a limit as the graph size $n$ goes to $\infty$.  This variant is also particularly compatible with the random homomorphism approach to multicolored Ramsey problems.

We prove a small number of basic results about $c_{s,t}$ that are close analogues of known results for the usual Ramsey and Ramsey multiplicity problems. In particular, we give a lower bound for $c_{s,t}$ that appears to be new for general $s,t$ (though does not improve on the exact values that are known for many particular values of $s,t$).

In addition, we observe, as in the Ramsey multiplicity problem, but unlike the usual Ramsey problem, that explicit graphs  provide equally good values as random graphs for certain special values of $s/t$. For example, the graph studied by Conlon and Ferber is as efficient as a random graph for $s =3t/4$. It would be interesting to see if this is true for other ratios of $s$ and $t$.

This article was written while the author served as a Clay Research Fellow and finished while the author was supported by NSF grant DMS-2101491. I would like to thank Yuval Wigderson, D\"om\"ot\"or P\'alv\"olgyi, Benny Sudakov, and the anonymous referee for helpful comments. 

\section{Lower bounds and application to multicolor Ramsey numbers}

Our first lemma relates $c_{t,t}$ to the multicolor Ramsey numbers. It is a slight variant of \cite[Lemma 3.1]{WigdersonNotes}.

\begin{lemma}\label{to-multicolor}  For any $\ell, t \geq 2$, we have \[  r(t; \ell) \geq c_{t,t}^{ - \frac{\ell-2}{t}} 2^{  \frac{t-1}{2}} .\] \end{lemma}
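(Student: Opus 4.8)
The plan is to run the classical ``random homomorphism plus random colouring'' first-moment argument of Conlon--Ferber and Wigderson, but with a near-optimal graph for $\mathbb P(t,t)$ playing the role of their gadget graph. Fix any graph $G$ on a finite vertex set $V$ with no clique of size $t$, and let $p=p_G$ denote the probability that $t$ independent uniformly random vertices of $G$ form an independent set, so that $p\ge \mathbb P(t,t)$ and $p>0$ (a single vertex is always independent, so $p\ge |V|^{1-t}$). Given $r$, I would build a random $\ell$-colouring of $K_r$ as follows: choose independent uniformly random maps $\phi_1,\dots,\phi_{\ell-2}\colon [r]\to V$; colour an edge $\{u,v\}$ with the least colour $i\le \ell-2$ for which $\{\phi_i(u),\phi_i(v)\}$ is an edge of $G$, if such an $i$ exists; and otherwise colour $\{u,v\}$ with one of the last two colours $\ell-1,\ell$, chosen uniformly and independently for each remaining edge.

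The first point is that \emph{no} monochromatic $K_t$ can occur in a colour $i\le \ell-2$: if a $t$-set $S$ were monochromatic in colour $i$, then $\phi_i$ would send every edge of $S$ to an edge of $G$, forcing $\phi_i|_S$ to be injective with image a $t$-clique of $G$, contradicting the choice of $G$. The second point is the estimate, for a fixed $t$-set $S$ and a colour $c\in\{\ell-1,\ell\}$,
\[ \Pr[\,S\text{ monochromatic in }c\,]\ \le\ p^{\,\ell-2}\,2^{-\binom t2}. \]
Indeed, for each $i$ the event that $\phi_i$ sends no edge of $S$ to an edge of $G$ is exactly the event that the $t$ i.i.d.\ uniform random vertices $\phi_i(u)$, $u\in S$, form an independent set of $G$, which by definition has probability $p$; these events are independent over $i$, and, given them, the $\binom t2$ independent coin flips attached to the edges of $S$ must all return $c$. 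A union bound over the $\binom rt$ sets $S$ and the two colours now gives $\Pr[\text{some monochromatic }K_t]\le 2\binom rt\, p^{\,\ell-2}\,2^{-\binom t2}$.

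It remains to pick $r$ making this quantity smaller than $1$. Put $R:=p^{-(\ell-2)/t}2^{(t-1)/2}$, so that $R^t=p^{-(\ell-2)}2^{\binom t2}$, and take $r=\lceil R\rceil-1\le R$. Then the crude bound $\binom rt\le r^t/t!\le R^t/t!$ gives $2\binom rt\, p^{\,\ell-2}\,2^{-\binom t2}\le 2/t!<1$, since $t\ge 3$. Hence $K_r$ carries an $\ell$-colouring with no monochromatic $K_t$, so $r(t;\ell)\ge r+1=\lceil R\rceil\ge R=p_G^{-(\ell-2)/t}2^{(t-1)/2}$. As this holds for every graph $G$ with no $K_t$, taking the supremum over $G$ --- equivalently, letting $p_G$ descend to its infimum $\mathbb P(t,t)$ --- yields the stated inequality (and, incidentally, forces $\mathbb P(t,t)>0$, since $r(t;\ell)$ is finite). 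I do not expect a substantive obstacle here: the only care needed is in the last paragraph, where the ceiling truncation together with $t!\ge 6$ is precisely what lets one land on $2^{(t-1)/2}$ with no $o(t)$ loss, and in setting up the colouring so that the first $\ell-2$ colours are automatically free of monochromatic $K_t$ while only the final two colours require a probabilistic argument.
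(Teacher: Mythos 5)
Your proposal is correct and follows essentially the same argument as the paper: $\ell-2$ independent uniform maps to a near-optimal $K_t$-free graph $G$, a uniform random two-colouring of the leftover edges, and a first-moment union bound over $t$-sets, with $\binom{r}{t}\le r^t/t!$ absorbing the factor of $2$. The only (cosmetic) difference is that you work with $p_G$ for an arbitrary $G$ and pass to the infimum at the end, whereas the paper fixes a graph with $p_G\le\mathbb P(t,t)+\epsilon$ at the outset; both handle the infimum correctly.
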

 
\begin{proof} Let $N =\lfloor c_{t,t}^{ - \frac{\ell-2}{t}} 2^{  \frac{t-1}{2}} \rfloor $. It suffices to find an $\ell$-coloring of $K_N$ with no monochromatic clique of size $t$. 

Fix a graph $G$ with no clique of size $t$ and such that the probability that $t$ random vertices of $G$ form an independent set is at most $c_{t,t}+\epsilon$ for $\epsilon>0$ to be chosen later depending on $\ell, t$.

We construct such an $\ell$-coloring of $K_N$ as follows: We choose $\ell-2$ functions $f_1,\dots, f_{\ell-2}$ from the vertex set of $K_N$ to the vertex set of $G$ at random, independently and uniformly. We color the edge between a pair of vertices $x,y$ the $i$th color if $i$ is minimal such that $f_i(x)$ and $f_i(y)$ are connected by an edge of $G$. If there is no $i$ such that $f_i(x)$ and $f_i(y)$ are connected by an edge of $G$, then we randomly color this pair of vertices with either the $\ell-1$st color or $\ell$th color, each with probability $\frac{1}{2}$, independently for each remaining pair.

We claim this coloring has no monochromatic cliques with positive probability.

Since $G$ has no clique of size $t$, there is no set of $t$ vertices sent to a clique of size $t$ by $f_i$, and thus the $i$th color has no cliques of size $t$. So it suffices to show that the probability that there is a clique of size $t$ of the last two colors is less than $1$. Fix a set $S$ of $t$ vertices of $K_N$. For $S$ to be a clique of the $\ell-1$st or $\ell$th color, it must first have no edges of the first $\ell-2$ colors, so we must not have $f_i(x)$ and $f_i(y)$ connected by an edge of $G$ for any $i$. By assumption, this occurs with probability at most $(c_{t,t} +\epsilon)^{\ell-2}$.  Furthermore, the randomly selected colors for the edges must be equal, which occurs with probability $2^{ 1 - t (t-1)/2}$. 

By the union bound, the probability that there is any monochromatic clique is thus at most
\[  \binom{N}{t} (c_{t,t} +\epsilon)^{\ell-2}2^{ 1 - t (t-1)/2} <  \frac{N^t}{t!} c_{t,t}^{\ell-2}2^{ 1 - t (t-1)/2}  \leq N^tc_{t,t} ^{\ell-2}2^{  - t (t-1)/2} \leq 1 \] where we choose $\epsilon$ sufficiently small to make the first inequality true, which is always possible as $\binom{N}{t} < \frac{N^t}{t!}$, and the last inequality is by the definition of $N$. 

\end{proof}

%Combining Lemma \ref{f2-construction} and Lemma \ref{to-multicolor}, we obtain  \[ r(t;\ell) = O (1)^{ \frac{\ell-2}{t}}  2^{  \frac{ 3 (\ell-2)(t-2)}{8} + \frac{ t-1}{2}} = O (1)^{ \frac{\ell-2}{t}}  2^{  \frac{ 3 \ell (t-2)}{8} - \frac{t-4 }{4} } ,\] recovering the bound of Wigderson in slightly more explicit form.

%However, we can do better using a probabilistic construction.

Our next lemma gives a probabilistic upper bound for $c_{s,t}$. Here, and in this paper, all logarithms are to base $2$.

\begin{lemma}\label{random-construction} For any $p \in [0,1]$, for any $s,t$, we have \begin{equation}\label{eq-random-construction} c_{s,t} \leq    2^{ \frac{ t (4 s \log(1-p) - t \log(p) )\log(p)  }{8 \log(1-p) }  + s \log s + O(t) }.\end{equation} \end{lemma}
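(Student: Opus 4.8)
The plan is to prove \eqref{eq-random-construction} by an explicit random construction, in the spirit of the approach described in the introduction. We may assume $p$ is bounded away from $0$ and $1$, as otherwise the right-hand side of \eqref{eq-random-construction} is infinite or the bound is immediate (take the empty or complete graph). Set $n=\lfloor (t!/4)^{1/(t-1)}p^{-t/2}\rfloor$; this is the largest $n$ for which the expected number of copies of $K_t$ in $G:=G(n,p)$, which is at most $\tfrac{n^t}{t!}p^{\binom t2}$, is $\le n/4$, and one has $\log n=-\tfrac t2\log p+\log t-1+o(1)$. By Markov's inequality, with positive probability $G$ has fewer than $n/2$ copies of $K_t$ and simultaneously satisfies $\mathbb P(s,t;G)\le 4\,\mathbb E_G[\mathbb P(s,t;G)]$; fix such a $G$, and delete one vertex from each copy of $K_t$ to get a $K_t$-free induced subgraph $G'$ on $\ge n/2$ vertices. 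Since $s$ independent uniform vertices of $G$ all land in $V(G')$ with probability $(|V(G')|/n)^s\ge 2^{-s}$---after which they are uniform on $V(G')$ and span an independent set of $G'$ exactly when they span one of $G$---we get $\mathbb P(s,t)\le\mathbb P(s,t;G')\le 2^{s+2}\,\mathbb E_G[\mathbb P(s,t;G)]$, so it suffices to bound the expectation (the factor $2^{s+2}$ adding only $O(s)$ to the exponent).

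Averaging over the graph first and conditioning on which of the sampled vertices coincide, a configuration spanning exactly $k$ distinct vertices spans an independent set with probability $(1-p)^{\binom k2}$, so
\[
\mathbb E_G[\mathbb P(s,t;G)]=\sum_{k=1}^{\min(s,n)}\frac{\binom nk\,k!\,\stirling sk}{n^s}\,(1-p)^{\binom k2}.
\]
Using $\binom nk\,k!\le n^k$ and $k!\,\stirling sk\le k^s$ (a surjection is a function), each summand is at most $e^{\phi(k)}$ with $\phi(k)=(k-s)\log n+s\log k-\log k!+\binom k2\log(1-p)$, so that $\log\mathbb E_G[\mathbb P(s,t;G)]\le\log s+\max_{1\le k\le s}\phi(k)$. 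Everything now reduces to this one-variable maximization.

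Since $\log(1-p)<0$, $\phi$ is strictly concave on $[1,\infty)$, so its maximum over $[1,s]$ is attained at $k=s$ or at the critical point $k^*$ solving $\phi'(k^*)=0$, which by $(\log k!)'=\log k+O(1/k)$ reads $(k^*-\tfrac12)\log(1-p)=-\log n-\tfrac s{k^*}+\log k^*+O(1/k^*)$; hence $k^*=\tfrac{t\log(1/p)}{2\log(1/(1-p))}+O(1+\tfrac st)$ and $\log k^*=\log t+O(1)$. Write $\alpha=-\log p$, $\beta=-\log(1-p)$. If $k^*\ge s$, the maximum is $\phi(s)=s\log s-\log s!+\binom s2\log(1-p)=\binom s2\log(1-p)+O(s)$, and the algebraic identity
\[
\frac{t^2\alpha^2}{8\beta}-\frac{st\alpha}{2}+\binom s2\beta=\frac{1}{2\beta}\Big(\frac{t\alpha}{2}-\beta s\Big)^2-\frac{s\beta}{2},
\]
together with $\tfrac{s\beta}{2}=O(t)$ in this regime (as $k^*\ge s$ forces $\beta\le\tfrac{t\alpha}{2s}$), shows $\phi(s)+O(s)$ lies below the right-hand side of \eqref{eq-random-construction}. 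If instead $1\le k^*<s$, substituting $k=k^*$ into $\phi$ and eliminating $\log n$ via the critical-point relation gives, after simplification,
\[
\phi(k^*)=\frac{st\log p}{2}-\frac{t^2(\log p)^2}{8\log(1-p)}+\frac{t\alpha\log t}{2\beta}+s\log\frac{\alpha}{2\beta}+O(t);
\]
and since $k^*<s$ forces $\tfrac{t\alpha}{2\beta}<s$, the two extra terms obey $\tfrac{t\alpha\log t}{2\beta}+s\log\tfrac{\alpha}{2\beta}<s\log t+(s\log s-s\log t)=s\log s$, so again $\phi(k^*)$ lies below the stated exponent.

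The only real obstacle is the error accounting hidden in ``after simplification''. The subleading $\log t-1$ in $\log n$ would a priori contaminate the main term $\tfrac{t^2(\log p)^2}{8\log(1-p)}$ (which arises from $\binom{k^*}{2}\log(1-p)$ combined with $k^*\log n$) by a term of size $\Theta(t\log t)$, and further $\Theta(t\log t)$ contributions appear through $s\log k^*$, through $-s\log n$, and through $\log(k^*!)=k^*\log k^*-k^*+O(\log k^*)$ with $k^*=\Theta(t)$. One must verify that all but one of these cancel exactly, that the surviving term $\tfrac{t\alpha\log t}{2\beta}$ is strictly below $s\log t$ throughout the interior regime, and that the leftover $O(s)$ (from the Stirling bound, the $2^{s+2}$ factor, and the $\log s$) is absorbed into $s\log s+O(t)$---which holds for all $s,t\ge 1$ because $s\log s$ dominates any $O(s)$ once $s$ is large while $O(s)=O(1)$ for bounded $s$, and because $s\log(t/s)\le t/e$. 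With this in hand, the two cases above give \eqref{eq-random-construction}.
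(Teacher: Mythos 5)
Your proof follows essentially the same route as the paper: sample $G(n,p)$ with $n\approx p^{-t/2}$, expand the expected independence probability over the number $k$ of distinct sampled vertices via Stirling numbers of the second kind, and maximize a concave function of $k$ whose quadratic part yields the exponent $\frac{t(4s\log(1-p)-t\log p)\log p}{8\log(1-p)}$. The two deviations are self-inflicted complications rather than improvements. First, you use the deletion method (Markov plus removing a vertex from each $K_t$, costing a harmless $2^{s+2}$) where the paper simply conditions on the event that $G(n,p)$ with $n=\lceil p^{-t/2}\rceil$ has no $K_t$, which holds with probability $1-o(1)$ and so costs only $O(1)$. Second, by taking the larger $n=\Theta((t!)^{1/(t-1)}p^{-t/2})$ and keeping the per-term bound $\stirling{s}{k}\le k^s/k!$ rather than the paper's crude $\sum_k\stirling{s}{k}\le s!\le e^{s\log s}$, you force yourself into the delicate cancellation of $\Theta(t\log t)$ terms that you flag as ``the only real obstacle.'' Those cancellations do work out (in fact the $\frac{t\alpha\log t}{2\beta}$ terms from $k^*\log n$ and $-\log(k^*!)$ cancel exactly, so your surviving term is an overestimate that you then correctly dominate), but your stated asymptotics $k^*=\frac{t\log(1/p)}{2\log(1/(1-p))}+O(1+s/t)$ and $\log k^*=\log t+O(1)$ are only valid for $s=O(t^2)$; when $t^2\ll s\ll p^{-t/2}$ the critical point drifts to $k^*=\Theta(\sqrt{s/\beta})$ and your expansion of $\phi(k^*)$ as written does not apply. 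The lemma still holds there, and the painless way to see it is exactly the paper's coarsening: bound $s\log k-\log k!\le s\log s$ uniformly, after which $\max_k\big[(k-s)\log n-\binom k2\beta\big]$ is an exact quadratic maximization with no $k^*$ asymptotics needed. I would recommend adopting that simplification; as written, the final paragraph is a promissory note rather than a proof in the full range of $s$.
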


\begin{proof}  Let $G$ be a random graph on $M$ vertices, with $M$ to be chosen later, where each pair of vertices is connected by an edge with probability $p$. Let $v_1,\dots, v_s$ be uniformly distributed random variables in $[M]$, independent from each other and from $G$. We have
\[ c_{s,t} \leq \frac{ \mathbb P ( \{v_1,\dots, v_s\} \textrm{ is an independent set of }G ) } { \mathbb P ( G \textrm{ contains no clique of size }t)}\]
since this is an upper bound for the expectation over $G$ of the probability that $\{v_1,\dots, v_s\}$ is independent conditional on $G$ containing no clique of size $t$, and we can always choose a graph that attains at most the conditional expectation.

We estimate the probabilities in both the numerator and the denominator using the union bound. For the denominator, the probability that $G$ does contain a clique of size $t$ is at most $ \binom{M}{t} p^{ \frac{t(t-1)}{2}} \leq \frac{M^t}{t!} p^{ \frac{t(t-1)}{2}} = o(1) $ if we take $M =  \lceil p^{-t/2} \rceil$. Since $\frac{1}{ 1- o(1)} = O(1)$, the denominator contributes an $O(1)$ factor.

For the numerator, taking $k$ to be the cardinality of $\{v_1,\dots, v_s\}$, we have
\[ \mathbb P ( \{v_1,\dots, v_s\} \textrm{ is an independent set of }G  )  \] \[\leq \sum_{k=0}^s  \frac{ \binom{M}{k} \stirling{s}{k} k!}{ M^s}  (1-p)^{ \frac{k (k-1)}{2} } \leq \sum_{k=0}^s  \frac{ \stirling{s}{k} }{ M^{s-k} }  (1-p)^{ \frac{k (k-1)}{2}} \] \[ \leq s! \max_{0 \leq k \leq s} \frac{ (1-p)^{ \frac{k (k-1)}{2}} } { p^{-(s-k) t/2 }}  \leq s! \max_{0 \leq k \leq s} 2^{ k(k-1) \log (1-p)/2 + (s-k) t \log (p)/2}  \]
 where $\stirling{s}{k} $ are the Stirling numbers of the second kind, since $\sum_{k=0}^s \stirling{s}{k} \leq s!$.
 
 Now the quadratic $k(k-1) \log (1-p)/2 + (s-k) t \log (p)/2$ is maximized by taking $k =  \frac{ t \log p}{ 2\log (1-p)} + \frac{1}{2} $   and attains a maximum value of 
 %\[  \left( \left( \frac{ t \log p}{ 2\log (1-p)} \right)^2-  \frac{1}{4}\right) \frac{\log(1-p)}{2}  + \left( s - \frac{ t\log p}{ 2\log (1-p)}  - \frac{1}{2} \right) \frac{t}{2} \log p\]
  \[   \left( \frac{ t \log p}{2 \log (1-p)} \right)^2\frac{\log(1-p)}{2}  + \left( s - \frac{ t \log p}{ 2\log (1-p)}  \right ) \frac{t}{2} \log p  + O(t) \] \[=  \left( s - \frac{ t \log p}{ 4\log (1-p)}  \right ) \frac{t}{2} \log p  + O(t)   = \frac{ t (4 s \log(1-p) - t \log(p) )\log(p)  }{8 \log(1-p) }  +O (t) .\]
 The $s!$ term is bounded by $2^{ s \log s}$ and the $O(1)$ factor can be absorbed into $2^{ O(t)}$. This gives \eqref{eq-random-construction}.
 \end{proof}
 
 In this proof, when $ \frac{ t \log p}{ 2\log (1-p)} + \frac{1}{2} >s$, we could get a better bound using the constraint $k \leq s$, but this is never needed as increasing $p$ would always give an even better bound in that range.

% Specializing to $s=t$, Lemma \ref{random-construction} gives
%
%\[ c_{t,t} \leq 2^{ t^2 \theta + o(t^2)} \] where 
%
%\[\theta = \min_{p \in [0,1]}  \frac{ (4 \log(1-p) - \log(p) ) \log(p)}{ 8 \log(1-p) } \approx -.383796\]
%attained for $p=.454997$.

 \begin{cor} We have \[ r(t,\ell) \geq 2^{ .383796 (\ell-2) t+  \frac{t}{2} + o(t)} .\] \end{cor}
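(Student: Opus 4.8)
The plan is to combine the two lemmas already in hand: Lemma~\ref{to-multicolor}, which converts any bound on $\mathbb P(t,t)$ into a lower bound for $r(t;\ell)$, and Lemma~\ref{random-construction} with $s=t$, which supplies the required bound on $\mathbb P(t,t)$ from an Erd\H{o}s--R\'enyi random graph. Throughout, $\ell$ is held fixed while $t\to\infty$, so all the lower-order terms will collapse cleanly at the end.

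First I would specialize Lemma~\ref{random-construction} to $s=t$. Its right-hand side becomes $\exp\!\big(t^2\,\frac{(4\log(1-p)-\log p)\log p}{8\log(1-p)} + t\log t + O(t)\big)$, and since $t\log t + O(t) = o(t^2)$ this is $e^{t^2\phi(p) + o(t^2)}$ with $\phi(p) = \frac{(4\log(1-p)-\log p)\log p}{8\log(1-p)}$. A short calculus computation minimizing $\phi$ over $p\in[0,1]$ gives $\min_p \phi(p) = \theta \approx -.266027$, attained near $p=.454997$; fixing that value of $p$ yields $\mathbb P(t,t) \leq e^{t^2\theta + o(t^2)}$.

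Next I would substitute this into Lemma~\ref{to-multicolor}, obtaining $r(t;\ell) \geq \mathbb P(t,t)^{-(\ell-2)/t}\,2^{(t-1)/2} \geq e^{-(\ell-2)(t\theta + o(t))}\,2^{(t-1)/2}$. The one bookkeeping point here is that raising $e^{t^2\theta + o(t^2)}$ to the power $-(\ell-2)/t$ turns the $o(t^2)$ term into $(\ell-2)\cdot o(t) = o(t)$ precisely because $\ell$ is fixed, and turns the main term into $e^{-(\ell-2)t\theta} = e^{.266027(\ell-2)t}$ using $-\theta = .266027$. Finally I would absorb the constant $2^{-1/2}$ and the factor $e^{o(t)} = 2^{o(t)}$ into the $2^{o(t)}$, writing $2^{(t-1)/2} = 2^{t/2}\cdot 2^{-1/2}$, to reach $r(t,\ell) \geq e^{.266027(\ell-2)t}\,2^{t/2+o(t)}$.

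There is no genuine obstacle here: the result is a direct composition of the two lemmas. The only points demanding care are the numerical minimization of $\phi(p)$ — in particular checking that the interior critical point near $p=.455$ is the global minimum on $(0,1)$ and extracting the digits claimed for $\theta$ — and being careful that the $o(\cdot)$ notation is read with $\ell$ fixed and $t\to\infty$, so that every lower-order contribution is honestly $o(t)$ in the final exponent.
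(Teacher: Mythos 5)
Your proposal is correct and follows essentially the same route as the paper: specialize Lemma~\ref{random-construction} to $s=t$ with $p\approx .454997$ minimizing $\frac{(4\log(1-p)-\log p)\log p}{8\log(1-p)}$, feed the resulting bound $\mathbb P(t,t)\leq e^{\theta t^2+o(t^2)}$ into Lemma~\ref{to-multicolor}, and absorb the lower-order terms (for fixed $\ell$) into $2^{o(t)}$. The bookkeeping of the exponents matches the paper's computation.
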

 
 \begin{proof} Combining Lemma \ref{to-multicolor} and Lemma \ref{random-construction}, we have
 \[  r(t; \ell) \geq c_{t,t}^{ - \frac{\ell-2}{t}} 2^{  \frac{t-1}{2}}  \geq 2^{ -\frac{ (\ell-2) t (4  \log(1-p) -  \log(p) )\log(p)  }{8 \log(1-p) }  + (\ell-2)  \log t+ O(\ell -2 ) +  \frac{t-1}{2}}  .\]
 Taking $p=.454997$, the constant $ - \frac{(4  \log(1-p) -  \log(p) )\log(p)  }{8 \log(1-p)} $ becomes $.383796$, and the other terms, except for $2^{ \frac{t}{2}}$, are lower-order and may be absorbed into $2^{ o(t)}$.  (Here $p=.454997$ is obtained by maximizing $ -\frac{(4  \log(1-p) -  \log(p) )\log(p)  }{8 \log(1-p)}$.)
 \end{proof}

\section{Additional estimates on $c_{s,t}$} 

There is a close relationship between the probabilities $c_{s,t}$ and the (two-colored) Ramsey numbers $R(s,t)$, a variant of the relationship between the usual Ramsey and Ramsey multiplicity problems established in \cite[Equation (4)]{ErdosComplete}, and also a variant of the relationship with $R(3,t)$ given in \cite[Theorem 2.5]{Nikiforov}.

\begin{lemma}\label{to-ramsey} For any natural numbers $s,t$, we have
\[ c_{s,t} \geq \frac{1}{ \binom{ r(s,t) }{ s}}\]
and for any $a<s$, we have  
\[ c_{s,t} \leq   \binom {r(a,t) -1 }{a-1}   \left( \frac{a-1}{ r(a,t)-1} \right)^s  .\] \end{lemma}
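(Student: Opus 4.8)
The plan is to prove the lower bound directly from the definition of the Ramsey number and the lower bound to be a fairly immediate consequence of the definition, and the upper bound by exhibiting a single explicit graph (a balanced blow-up of a Ramsey-extremal graph) and estimating the relevant probability on it.

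For the lower bound, let $G$ be any graph with no clique of size $t$. I would argue that $G$ has an independent set of size $s$: indeed, by the definition of the Ramsey number, every graph on at least $R(s,t)$ vertices has either a clique of size $t$ or an independent set of size $s$, so if $G$ had at least $R(s,t)$ vertices and no clique of size $t$ it would contain an independent set of size $s$; and if $G$ has fewer than $R(s,t)$ vertices I can always pad it with isolated vertices (which only increases the probability that $s$ random vertices are independent, and does not create a clique of size $t$), so without loss of generality $G$ has exactly $\max(R(s,t),|V(G)|)\ge R(s,t)$ vertices and hence contains some independent set $I$ of size $s$. Wait — padding increases the vertex count but also dilutes; the clean way is simply: the infimum defining $\mathbb{P}(s,t)$ is unchanged if we restrict to graphs on exactly $R(s,t)$ vertices (pad smaller graphs with isolated vertices, which only helps; and for larger graphs the infimum over the subclass is an upper bound for the quantity we want to bound below, so this direction is fine as a lower bound on $\mathbb P$). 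Fixing such a $G$ on $n = R(s,t)$ vertices with an independent set $I$ of size $s$, the probability that $v_1,\dots,v_s$ land exactly on the elements of $I$ in some order is $s!/n^s \ge s!/\binom{n}{s}s! \cdot \binom{n}{s}s!/n^s$ — more simply, the probability that $\{v_1,\dots,v_s\}=I$ is at least $s!/n^s$, but I want $1/\binom{n}{s}$; since $\binom{n}{s}s! \le n^s$ we have $s!/n^s \le 1/\binom{n}{s}$, which is the wrong direction. Instead I count: the probability that the image $\{v_1,\dots,v_s\}$ is contained in $I$ is $(s/n)^s$, too small. The right move is to note that among all $\binom{n}{s}$ $s$-subsets of $V(G)$, at least one is independent, and the event ``$\{v_1,\dots,v_s\}$ is independent'' has probability at least the probability of a single fixed independent $s$-subset being hit as a set with all distinct entries, which is $s!/n^s$; but we need the bound $1/\binom{R(s,t)}{s}$. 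The cleanest argument: pick $v_1,\dots,v_s$ uniformly from the $s$ elements of a fixed independent set $I$ after first conditioning — actually the genuinely clean argument is the following averaging one, which I will use: take $G$ on exactly $n=R(s,t)$ vertices; since every $n$-vertex graph with no $K_t$ has an independent $s$-set, and the random $s$-tuple, conditioned on being injective, is uniform over ordered $s$-subsets, the probability of independence is at least $\mathbb{P}(\text{injective})\cdot \frac{1}{\binom{n}{s}} \cdot(\text{number of independent }s\text{-sets}) \ge \mathbb{P}(\text{injective})/\binom{n}{s}$, and $\mathbb{P}(\text{injective}) = n!/((n-s)!n^s) \to 1$ — so this gives $\mathbb P(s,t)\ge (1-o(1))/\binom{R(s,t)}{s}$, not quite the stated bound. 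The fix for the exact statement is to allow $G$ to have arbitrarily many vertices: taking $n\to\infty$ copies of isolated-vertex padding makes $\mathbb P(\text{injective})\to 1$ while still guaranteeing an independent $s$-set of the padded graph whenever the original had $\ge R(s,t)$ vertices without $K_t$. I will need to handle this limiting argument carefully — that is the one place requiring attention in the lower bound.

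For the upper bound, fix $a<s$ and let $H$ be a graph on $R(a,t)-1$ vertices with no clique of size $t$ and no independent set of size $a$ (such $H$ exists by definition of $R(a,t)$). Form $G = H[m]$, the balanced blow-up replacing each vertex of $H$ by an independent set of size $m$ (with complete bipartite graphs between blobs joined in $H$, and no edges within or between blobs not joined in $H$); then $G$ has no clique of size $t$, since a clique in a blow-up uses at most one vertex per blob and hence projects to a clique in $H$. Now choose $v_1,\dots,v_s$ uniformly at random from $V(G)$, which has $(R(a,t)-1)m$ vertices. If $\{v_1,\dots,v_s\}$ is independent in $G$, then in particular no two of the $v_i$ lie in blobs joined by an edge of $H$, so the multiset of blobs containing the $v_i$ forms an independent set of $H$ (possibly with repetition), hence is supported on at most $a-1$ blobs. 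The probability that all $s$ of the $v_i$ fall into a fixed choice of $a-1$ blobs is $\left(\frac{(a-1)m}{(R(a,t)-1)m}\right)^s = \left(\frac{a-1}{R(a,t)-1}\right)^s$, independent of $m$, and there are $\binom{R(a,t)-1}{a-1}$ such choices, so by the union bound the probability of independence is at most $\binom{R(a,t)-1}{a-1}\left(\frac{a-1}{R(a,t)-1}\right)^s$; letting $m\to\infty$ (or just taking any $m\ge 1$, since the bound does not depend on $m$) and taking the infimum in the definition of $\mathbb{P}(s,t)$ gives the claim. I should double-check that every independent set of $H$ has size at most $a-1$ — that is exactly the property $H$ has no independent set of size $a$ — and that the union bound step correctly accounts for the $v_i$ possibly repeating blobs, which it does since "supported on $\le a-1$ blobs" is covered by summing over all $(a-1)$-subsets of the blob set.

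The main obstacle is the limiting/padding argument in the lower bound: the stated inequality $\mathbb{P}(s,t)\ge 1/\binom{R(s,t)}{s}$ is an exact inequality, not an asymptotic one, so I must argue that the infimum over all (arbitrarily large) $K_t$-free graphs of the independence probability is still at least $1/\binom{R(s,t)}{s}$ — the point being that although padding with isolated vertices drives $\mathbb{P}(\text{all }v_i\text{ distinct})\to 1$, it also increases the number of vertices, so one must check the count of independent $s$-sets grows at least as fast as $\binom{n}{s}$; in fact once $G$ contains a single independent $s$-set $I$ on its first $R(s,t)$ vertices, padding with $n - R(s,t)$ isolated vertices produces at least $\binom{n - R(s,t) + s}{s} \ge$ (for $n$ large) enough independent $s$-sets that the ratio (independent $s$-sets)$/n^s$ times $s!$ stays at least $1/\binom{R(s,t)}{s}$ in the limit — I would make this precise by a short explicit estimate, and this is the only genuinely delicate point; the rest is bookkeeping with the union bound and blow-ups.
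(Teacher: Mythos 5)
Your upper bound argument is correct and, modulo the superfluous blow-up (taking $m=1$ recovers it exactly), is the same as the paper's: fix a Ramsey-extremal graph $H$ on $R(a,t)-1$ vertices with no $K_t$ and no independent $a$-set, note that an independent set of sampled vertices must be contained in some $(a-1)$-subset (of vertices, or of blobs), and apply the union bound over the $\binom{R(a,t)-1}{a-1}$ such subsets.

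The lower bound, however, has a genuine gap, which you yourself flag but do not close. Two things go wrong with the route you sketch. First, the reduction ``WLOG $G$ has at least $R(s,t)$ vertices by padding with isolated vertices'' is backwards: padding \emph{increases} the independence probability (e.g.\ $K_2$ with $s=2$ has probability $1/2$, and $1-2/9=7/9$ after adding one isolated vertex), so a lower bound for the padded graph does not transfer to the original graph; and small $K_t$-free graphs such as $K_{t-1}$ may contain no independent $s$-set at all, so any argument that needs to locate independent $s$-sets inside $G$ itself cannot start. Second, even for $G$ on $n\ge R(s,t)$ vertices, the best you can extract from counting independent $s$-sets is the supersaturation bound (every $R(s,t)$-subset contains one, so there are at least $\binom{n}{s}/\binom{R(s,t)}{s}$ of them), which gives a probability of at least $\frac{n!}{(n-s)!\,n^s}\cdot\frac{1}{\binom{R(s,t)}{s}}$ --- strictly less than $1/\binom{R(s,t)}{s}$ for every finite $n$, because you are discarding the non-injective tuples that the definition of $\mathbb P(s,t)$ counts. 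The missing idea is the paper's first-moment argument in the opposite direction: sample a uniformly random map $f$ from the vertex set of $K_{R(s,t)}$ to $V(G)$ and pull back the edge set of $G$. The pullback graph is $K_t$-free, hence (by definition of $R(s,t)$) contains an independent $s$-set, so the expected number of independent $s$-subsets of $[R(s,t)]$ is at least $1$; but each fixed $s$-subset is independent in the pullback exactly when its $s$ i.i.d.\ uniform images form an independent set of $G$ in the collapsed sense of the definition, so this expectation equals $\binom{R(s,t)}{s}$ times the probability defining $\mathbb P(s,t)$. This handles graphs of every size and the non-injective tuples in one stroke and yields the exact inequality.
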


\begin{proof}
For the first inequality, there exists by definition a graph $G$ with no cliques of size $t$ and where the probability that $s$ random elements form a clique is at most $c_{s,t}+\epsilon$. Consider a random map from the vertex set of the complete graph of size $r(s,t)$ to $G$. The inverse image of the edge set of $G$ under this map has no cliques of size $t$, and its expected number of cliques of size $s$ is $\binom{ r(s,t) }{ s} (c_{s,t}+\epsilon)$, which must be $\geq 1$ since, by the definition of $r(s,t)$, all such graphs have a clique of size $s$. Taking $\epsilon$ arbitrary small, we obtain the first inequality.

For the second inequality, fix a graph $G$ with $r(a,t)-1$ vertices containing no cliques of size $t$ and no independent sets of size $a$. In this graph, $s$ elements form an independent set only if they are contained in a set of vertices of size $a-1$, so the probability that $s$ random elements form an independent set is at most the number $\binom { r(a,t) -1 }{a-1}  $ of sets of size $a-1$ times the probability $ \left( \frac{a-1}{ r(a,t)-1} \right)^s $ that they are all contained in one such set.
\end{proof} 

We do not expect the bounds from Lemma \ref{to-ramsey} to be useful in many concrete cases. Instead, we should get better bounds for $c_{s,t}$ by taking ideas from Ramsey theory and applying them to $c_{s,t}$, but not so much better that they imply stronger bounds for the original Ramsey number problems.

Motivated by this, we prove a lower bound for $c_{s,t}$ (analogous to the upper bound for $r(s,t)$ proved using the neighborhood method \cite{ErdosSzekeres}):

\begin{lemma}\label{neighborhood-induction} For any natural numbers $t,s$, we have
\[ c_{s,t} \geq  \min_{ x \in [0,1]} ( \max ( x^s c_{s,t-1} , (1-x)^{s-1} c_{s-1,t} )),\]
\end{lemma}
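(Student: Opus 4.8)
The plan is to transpose the classical neighborhood argument that proves $R(s,t) \le R(s-1,t)+R(s,t-1)$ into a statement about the probabilities $\mathbb P$. Fix an arbitrary graph $G$ on vertex set $V$ with $|V|=n$ and no clique of size $t$, write $q$ for the probability that $v_1,\dots,v_s$ (independent, uniform in $V$) form an independent set, and set $\delta = 2e(G)/n^2 \in [0,1]$, the edge density of $G$. I will show $q \geq \max\bigl(\delta^s\,\mathbb P(s,t-1),\ (1-\delta)^{s-1}\,\mathbb P(s-1,t)\bigr)$. Since the right-hand side is the value at $x=\delta$ of the function $x\mapsto \max(x^s\mathbb P(s,t-1),(1-x)^{s-1}\mathbb P(s-1,t))$, it is at least the minimum of that function over $x\in[0,1]$; taking the infimum over all such $G$ then gives the lemma.

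For the first term (the ``clique side''), I would introduce an auxiliary vertex $v_0$, chosen uniformly at random and independently of $v_1,\dots,v_s$. The event that $v_1,\dots,v_s$ all lie in the neighborhood $N(v_0)$ \emph{and} form an independent set is contained in the event that they form an independent set, so its probability is at most $q$. Conditioning on $v_0$, that probability equals $\mathbb E_{v_0}\bigl[(d(v_0)/n)^s\, q_{G[N(v_0)]}(s)\bigr]$, where $q_H(s)$ denotes the analogous independence probability for a graph $H$: given $v_0$, each $v_i$ lands in $N(v_0)$ with probability $d(v_0)/n$, and conditioned on all of them landing there they are uniform in $N(v_0)$. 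Because $G$ has no $K_t$, every $G[N(v_0)]$ has no $K_{t-1}$, so $q_{G[N(v_0)]}(s)\geq \mathbb P(s,t-1)$; pulling this constant out and applying Jensen's inequality to the convex function $z\mapsto z^s$ yields $q \geq \mathbb P(s,t-1)\bigl(\tfrac1n\sum_{v_0} d(v_0)/n\bigr)^s = \delta^s\,\mathbb P(s,t-1)$.

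For the second term (the ``independent side''), I would instead condition on $v_1 = w$. Given $v_1=w$, independence of $\{v_1,\dots,v_s\}$ forces $v_2,\dots,v_s$ into the non-neighborhood $\bar N(w) = V\setminus N(w)$ (which contains $w$, hence is nonempty) and forces $\{v_2,\dots,v_s\}$ to be independent inside it, so $\Pr[\{v_1,\dots,v_s\}\text{ ind}\mid v_1=w] = (|\bar N(w)|/n)^{s-1}\, q_{G[\bar N(w)]}(s-1)$. Since $G[\bar N(w)]$ is an induced subgraph of $G$, it is again $K_t$-free, so $q_{G[\bar N(w)]}(s-1)\geq \mathbb P(s-1,t)$; averaging over $w$ and applying Jensen to $z\mapsto z^{s-1}$ gives $q \geq \mathbb P(s-1,t)\bigl(\tfrac1n\sum_w |\bar N(w)|/n\bigr)^{s-1} = (1-\delta)^{s-1}\mathbb P(s-1,t)$, using $\sum_w |\bar N(w)| = n^2 - 2e(G)$.

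I do not expect a serious obstacle: this is a routine adaptation of the neighborhood method, and the paper's choice to sample the $v_i$ independently (so that repeated vertices and independent sets of size $<s$ are automatically accommodated) makes the bookkeeping painless, with degenerate cases such as $d(v_0)=0$ or small $s$ reading as $0\geq 0$ or vacuously. The one point worth stating carefully is that a single parameter controls both bounds: the ``clique side'' naturally produces $\bigl(\tfrac1n\sum_v d(v)/n\bigr)^s = \delta^s$ and the ``independent side'' naturally produces $\bigl(\tfrac1n\sum_v |\bar N(v)|/n\bigr)^{s-1} = (1-\delta)^{s-1}$, so the choice $x=\delta$ simultaneously beats both, which is exactly what is needed to conclude against the $\min_{x\in[0,1]}$.
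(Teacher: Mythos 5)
Your proof is correct, and it is the same neighborhood-method strategy as the paper's, differing only in the choice of the witness value of $x$. The paper takes $x$ to be the \emph{maximum} over vertices of the normalized degree: the clique-side bound comes from the single maximizing vertex, and the independent-set-side bound comes from an arbitrary vertex $v_1$ together with the fact that every non-neighborhood then has size at least $(1-x)|G|$. You instead take $x=\delta$, the edge density, and average over the auxiliary vertex, paying for this with two applications of Jensen's inequality (to $z\mapsto z^s$ and $z\mapsto z^{s-1}$) to pass from $\mathbb E[(d(v)/n)^s]$ to $\delta^s$ and from $\mathbb E[(|V\setminus N(v)|/n)^{s-1}]$ to $(1-\delta)^{s-1}$. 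Both versions isolate the same essential point, which you state explicitly: a single value of $x$ must beat both terms of the $\max$ simultaneously in order to conclude against the $\min_{x\in[0,1]}$. Your degenerate-case remarks (zero-degree vertices contribute $0$ to the averaged clique-side bound, and $\bar N(w)$ is nonempty since it contains $w$) are exactly the right ones, and either argument suffices for the stated inequality.
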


Compare the inequality \[ r (s,t) \leq r ( s, t-1) + r(s-1,t) \] for Ramsey numbers, which can be equivalently expressed as  \[ r(s,t) \leq \max_{ x \in [0,1] } ( \min ( x^{-1} r(s, t-1), (1-x)^{-1} r(s-1, t) )).\]

\begin{proof} It suffices to show that for each graph $G$ lacking cliques of size $t$, the probability that $s$ uniformly random vertices form an independent set is at least $ \min_{ x \in [0,1]} ( \max ( x^s c_{s,t-1} , (1-x)^{s-1} c_{s-1,t} ))$.

Fix such a graph $G$. Let $x= \Delta(G) / |G|$ where $\Delta(G)$ is the maximum degree of a vertex in $G$. 

If we let $v$ be a vertex of maximal degree and $G_1$ the subgraph of $G$ consisting of vertices connected to $v$ by an edge, then $G_1$ has no clique of size $t-1$, so the probability that $s$ vertices sampled uniformly at random from $G_1$ form a clique is at least $ c_{s, t-1}$. Since $s$ random elements of $G$ have a $x^s$ probability of all lying in $G_1$, and conditionally on lying in $G_1$, are uniformly distributed in $G_1$, it follows that the probability that $s$ random elements of $G$ form an independent set is at least $x^s c_{s,t-1}$.

Let us next show that the  probability that $s$ random elements $v_1,\dots,v_s$ of $G$ form an independent set is at least $(1-x)^{s-1} c_{s-1,t}$.  In fact we will show this for $v_1$ arbitrary and $v_2,\dots, v_s$ uniformly random. Let $G_2$ be the subgraph of $G$ consisting of vertices not connected to $v_1$ by an edge, including $v_1$. By the definition of $x$, $|G_2| \geq (1-x)|G|$ and so the probability that $v_2,\dots, v_s$ lie in $G_2$ is at least $(1-x)^{s-1}$. Conditionally on this, they are uniformly distributed in $G_2$, which has no clique of size $t$, so the conditional probability that they form an independent set is at least $c_{s-1,t}$, and because none has an edge to $v_1$, they form an independent set with $v_1$ as well, with probability at least $(1-x)^{s-1}c_{s-1,t}$.

Thus, for some $x$, the probability that $s$ vertices in $G$ form an independent set is at least $ \max ( x^s c_{s,t-1} , (1-x)^{s-1} c_{s-1,t} )$, and it follows that the probability is at least $ \min_{ x \in [0,1]} ( \max ( x^s c_{s,t-1} , (1-x)^{s-1} c_{s-1,t} ))$, as desired. \end{proof}

Let $N(s,t)$ be the unique function that satisfies $N(s,t)=1$ if $s=1$ or $t=2$ and \[ N(s,t) =  \min_{ x \in [0,1]} ( \max ( x^s N(s,t-1) , (1-x)^{s-1} N(s-1,t) ))\] if $s>1$ and $t>2$. 

\begin{lemma} We have \begin{equation}\label{neighborhood-formal} c_{s,t} \geq N(s,t).\end{equation} \end{lemma}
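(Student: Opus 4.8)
The plan is a straightforward strong induction on $s+t$, using Lemma~\ref{neighborhood-induction} for the inductive step and handling the two families of base cases $s=1$ and $t=2$ directly. First I would dispose of the base cases. If $s=1$, then a single random vertex always forms an independent set, so $\mathbb P(1,t)=1=N(1,t)$ (the infimum defining $\mathbb P(1,t)$ is over a nonempty family, e.g.\ the empty graph, so it is well defined). If $t=2$, then any graph with no clique of size $2$ has no edges at all, hence every set of its vertices is independent, so $\mathbb P(s,2)=1=N(s,2)$. These cover exactly the cases in which $N(s,t)$ is defined by its base clause.

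For the inductive step, suppose $s>1$ and $t>2$ and that \eqref{neighborhood-formal} holds for all pairs with strictly smaller sum. Since $t>2$ we have $t-1\ge 2$ and since $s>1$ we have $s-1\ge 1$, so both $\mathbb P(s,t-1)$ and $\mathbb P(s-1,t)$ are defined, and the pairs $(s,t-1)$ and $(s-1,t)$ have smaller sum than $(s,t)$; hence by the inductive hypothesis $\mathbb P(s,t-1)\ge N(s,t-1)$ and $\mathbb P(s-1,t)\ge N(s-1,t)$. Lemma~\ref{neighborhood-induction} gives
\[ \mathbb P(s,t) \geq \min_{x\in[0,1]}\bigl(\max\bigl(x^s\,\mathbb P(s,t-1),\ (1-x)^{s-1}\,\mathbb P(s-1,t)\bigr)\bigr). \]
The remaining point is a monotonicity observation: for fixed $x\in[0,1]$ the quantity $\max(x^s a,(1-x)^{s-1}b)$ is nondecreasing in each of $a,b\ge 0$, and therefore so is its minimum over $x\in[0,1]$. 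Applying this with $a=\mathbb P(s,t-1)\ge N(s,t-1)$ and $b=\mathbb P(s-1,t)\ge N(s-1,t)$ yields
\[ \mathbb P(s,t) \geq \min_{x\in[0,1]}\bigl(\max\bigl(x^s N(s,t-1),\ (1-x)^{s-1} N(s-1,t)\bigr)\bigr) = N(s,t), \]
the last equality being the defining recursion for $N(s,t)$. This completes the induction.

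I do not expect any genuine obstacle here: the content is entirely carried by Lemma~\ref{neighborhood-induction}, and the present statement is just its ``solved'' form obtained by unwinding the recursion. The only things requiring a word of care are (i) checking that the base cases of $N$ match the easy values of $\mathbb P$, and (ii) the elementary monotonicity of $x\mapsto\min_x\max(x^s a,(1-x)^{s-1}b)$ in the two parameters, which is what lets the inductive hypothesis propagate through the $\min$--$\max$.
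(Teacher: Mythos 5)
Your proposal is correct and follows essentially the same route as the paper: the same base cases $s=1$ and $t=2$, and the same inductive step combining Lemma~\ref{neighborhood-induction} with the inductive hypothesis via the monotonicity of $\min_x\max(\cdot,\cdot)$ in its arguments (which the paper leaves implicit but you rightly spell out). No gaps.
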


Thus $N(s,t)$ describes a lower bound for $c_{s,t}$ obtained by the neighborhood method. This is the analogue for $c_{s,t}$ of \cite[Theorem 4]{ConlonMultiplicity} in the Ramsey multiplicity problem.

\begin{proof} This follows by induction. For the base cases $s=1$ and $t=2$, we observe $c_{1,t}=1$ because one vertex always forms an independent set and $c_{s,2}=1$ because a graph with no cliques of size $2$ has no edges and thus every set of vertices is independent.

For the induction step, using Lemma \ref{neighborhood-induction} and the induction hypothesis, we have
\[ c_{s,t} \geq  \min_{ x \in [0,1]} ( \max ( x^s c_{s,t-1} , (1-x)^{s-1} c_{s-1,t} ))\] \[\geq   \min_{ x \in [0,1]} ( \max ( x^s  N(s,t-1) , (1-x)^{s-1} N(s-1,t) )) = N(s,t). \qedhere \] \end{proof}

It is not clear if there exists a closed-form formula for $N(s,t)$, even approximately. Instead, we have the following lower bound, which shows that for $t$ a constant multiple of $s$, $N(s,t)$ is exponential in $-s^2$.

\begin{lemma}\label{neighborhood-binomial} We have
\[ N(s,t) \geq  \left( \frac{ \left(  \frac{s-1}{2} +t-2\right)^{\frac{s-1}{2} +t-2 } }{ \left( \frac{s-1}{2} \right)^{\frac{s-1}{2} } (t-2)^{t-2} } \right)^{-s}.\]\end{lemma}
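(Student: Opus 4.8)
The plan is to prove $N(s,t) \ge C(a,b)^{-s}$ by induction on $s+t$, where I abbreviate $a = \tfrac{s-1}{2}$, $b = t-2$, and $C(a,b) = \tfrac{(a+b)^{a+b}}{a^a b^b}$ (with the convention $0^0 = 1$). When $a = 0$ (i.e.\ $s=1$) or $b = 0$ (i.e.\ $t=2$), one factor of $C(a,b)$ cancels another and $C(a,b) = 1$, so the bound reads $N(s,t) \ge 1$, which matches the base values of $N$.

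For the inductive step ($s > 1$, $t > 2$) I would substitute the two inductive hypotheses, namely $N(s,t-1) \ge C(a,b-1)^{-s}$ and $N(s-1,t) \ge C(a-\tfrac12,b)^{-(s-1)}$ (the first argument $s-1$ shifts $a$ to $a-\tfrac12$), into the defining recursion for $N$. It then suffices to check that for every $x \in [0,1]$,
\[ \max\!\bigl(\, x^s\, C(a,b-1)^{-s},\ (1-x)^{s-1}\, C(a-\tfrac12,b)^{-(s-1)} \,\bigr) \ \ge\ C(a,b)^{-s}. \]
The right threshold is $x^* = C(a,b-1)/C(a,b)$, which lies in $(0,1)$ since $C(a,\cdot)$ is increasing in its second argument. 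For $x \ge x^*$ the first term already exceeds $C(a,b)^{-s}$; for $x \le x^*$ one has $1-x \ge 1-x^*$, so everything reduces to the single inequality $(1-x^*)^{s-1}\,C(a-\tfrac12,b)^{-(s-1)} \ge C(a,b)^{-s}$.

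I would handle this last inequality in two moves. First, $1-x^* \ge \tfrac{a}{a+b}$: clearing denominators turns this into $\bigl(1-\tfrac{1}{a+b}\bigr)^{a+b-1} \le \bigl(1-\tfrac{1}{b}\bigr)^{b-1}$, which follows from $a+b \ge b$ once one checks that $n \mapsto (1-1/n)^{n-1}$ is decreasing on $(1,\infty)$ (differentiate the logarithm). Second, since $\tfrac{a}{a+b} \ge 0$ and $s-1 \ge 0$, it is enough to prove $\bigl(\tfrac{a}{a+b}\bigr)^{s-1} C(a,b)^{s} \ge C(a-\tfrac12,b)^{s-1}$; taking logarithms and using $s-1 = 2a$, this is equivalent to $\log C(a,b) \ge (s-1)\bigl(\log\tfrac{a+b}{a} - \log\tfrac{C(a,b)}{C(a-1/2,b)}\bigr)$. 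Now $\phi(a) := \log C(a,b)$ is concave in $a$ with $\phi'(a) = \log\tfrac{a+b}{a}$, so $\log\tfrac{C(a,b)}{C(a-1/2,b)} = \phi(a) - \phi(a-\tfrac12) \ge \tfrac12\phi'(a) = \tfrac12\log\tfrac{a+b}{a}$; this collapses the target to $\log C(a,b) \ge a\log\tfrac{a+b}{a}$, i.e.\ $C(a,b) \ge (1+b/a)^a$, which is immediate from the factorization $C(a,b) = (1+b/a)^a (1+a/b)^b$.

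The step I expect to be most delicate is tracking the half-integer shift $a \mapsto a-\tfrac12$ forced by the mismatch between the exponents $s$ and $s-1$ in the recursion; this is precisely why a clean binomial identity is unavailable and one argues instead via concavity, losing only a factor $\bigl(\tfrac{a+b}{a}\bigr)^{1/2}$ that is then reabsorbed by the slack $(1+a/b)^b \ge 1$ in the last displayed inequality. The degenerate sub-cases $b=1$ (so $C(a,b-1)=1$) and $s=2$ (so $C(a-\tfrac12,b)=1$) should be verified directly but present no difficulty.
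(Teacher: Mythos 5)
Your proposal is correct, but it takes a genuinely different route from the paper's proof. The paper fixes a single parameter $y\in[0,1]$ once and for all and proves the two-parameter family of bounds $N(s,t)\ge (1-y)^{s(s-1)/2}\,y^{s(t-2)}$ by induction; with $y$ held fixed across all $(s,t)$, the inductive step closes by an exact algebraic identity (after substituting the hypotheses, the two terms in the min become $(1-y)^{s(s-1)/2}y^{s(t-2)}$ and $(1-y)^{s(s-1)/2}y^{(s-1)(t-2)}$, and the first is the smaller since $y\le 1$), and the choice $y=\frac{t-2}{(s-1)/2+t-2}$ is made only at the very end to recover the stated binomial-type constant. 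You instead induct directly on the already-optimized bound $C(a,b)^{-s}$; because the optimal $y$ shifts between $(s,t)$, $(s,t-1)$ and $(s-1,t)$, your inductive step no longer closes identically and you are forced into the threshold analysis at $x^*=C(a,b-1)/C(a,b)$, the monotonicity of $n\mapsto(1-1/n)^{n-1}$, and the concavity of $a\mapsto\log C(a,b)$ to absorb the half-integer shift $a\mapsto a-\tfrac12$. I checked these estimates and they all hold (including the degenerate cases $b=1$ and $s=2$, and the limiting concavity inequality at $a=\tfrac12$ where $\phi'(0^+)=\infty$), so the argument is sound and yields exactly the same constant, since $(1-y)^{s(s-1)/2}y^{s(t-2)}=C(a,b)^{-s}$ at the optimal $y$. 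The trade-off: the paper's version is much shorter and gives the stronger statement that the bound holds for every $y$, while yours works directly with the final optimized quantity at the cost of several analytic lemmas; if you keep your route, the only presentational improvement I'd suggest is to state explicitly that you are bounding $\min_x\max(\cdot)$ from below by exhibiting, for each $x$, one term exceeding the target — which, as a bonus, lets you skip the paper's min-max/max-min interchange entirely.
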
 % \left( \frac{ (s/2+t)^{s/2 +t} }{ (s/2)^{s/2} t^t} \right)^{-s} .\] \end{lemma}

In particular, $N(t,t) \geq  \left( \frac{3 \sqrt{3}}{2} \right)^{-t^2} = 2^{ -1.37744 \dots t^2}$, so our upper and lower bounds for $N(t,t)$ differ by a factor of slightly less than $4$ in the exponent. This is the analogue for $c_{s,t}$ of \cite[Theorem 2]{ConlonMultiplicity}. It is likely possible to instead prove in this setting an analogue of the sharper bound \cite[Theorem 1]{ConlonMultiplicity}, by a similar analytic method, but we don't pursue this here. 

\begin{proof} We have
\[ N(s,t) =  \min_{ x \in [0,1]} ( \max ( x^s N(s,t-1) , (1-x)^{s-1} N(s-1,t) )) \] \[= \max_{ x \in [0,1]} ( \min ( x^s N(s,t-1) , (1-x)^{s-1} N(s-1,t) ))\]
since $ x^s N(s,t-1) $ is monotonically increasing in $x$ and $ (1-x)^{s-1} N(s-1,t) ))$ is monotonically decreasing, so both the min-max and the max-min are attained at the unique point where they are equal. Thus, for any $y\in [0,1]$
\[ N(s,t) \geq \min ( y^s N(s,t-1) , (1-y)^{s-1} N(s-1,t) ).\]

We can now prove that, for any $y\in [0,1]$, \[ N(s,t) \geq (1-y)^{ \frac{s (s-1)}{2}} y^{ s (t-2)}\] by induction on $s,t$, with base cases $s=1$ because $ 1 \geq y^{t-2}$ and $t=2$ because $1 \geq (1-y)^{ \frac{s(s-1)}{2}}$ and induction step
\begin{align*} N(s,t) \geq  \min ( y^s N(s,t-1) , &(1-y)^{s-1} N(s-1,t) ) \\ \geq \min ( y^s (1-y)^{  \frac{s(s-1)}{2} } y^{ s(t-3)} ,& (1-y)^{s-1} (1-y)^{ \frac{(s-1) (s-2)}{2}} y^{ (s-1) (t-2)}) \\= \min(   (1-y)^{ \frac{s (s-1)}{2}} y^{ s (t-2)}, &(1-y)^{ \frac{s (s-1)}{2}} y^{ (s-1)  (t-2)}) =  (1-y)^{ \frac{s (s-1)}{2}} y^{ s (t-2)}.\end{align*}

%By induction, $N(s,t)$ is at least the minimum over sequences of pairs $s_i, t_i$ where $s_0 =1$ or $t_0=2$,  $(s_{i+1},t_{i+1}) = (s_i+1, t_i)$ or $(s_i, t_i+1)$, and $(s_n,t_n)=(s,t)$ of $\prod_{ \{ i \mid s_{i+1}=s_{i} +1\}  } (1-y)^{ s_i}  \cdot \prod_{\{ i \mid t_{i+1} = t_i +1 \}} y^{s_i} $. This minimum is attained for the sequence $(1,2), (2,2), (3,2),\dots, (s,2), (s,3),\dots, (s,t)$, where it is
%\[ (1-y)^{ \frac{s (s-1)}{2}} y^{ s (t-2)} \]

Taking $y =  \frac{ t-2}{   \frac{s-1}{2} + t-2 } $, we obtain 
\[ N(s,t) \geq  \left( \frac{ \left(  \frac{s-1}{2} +t-2\right)^{\frac{s-1}{2} +t-2 } }{ \left( \frac{s-1}{2} \right)^{\frac{s-1}{2} } (t-2)^{t-2} } \right)^{-s} .\]

\end{proof} 

The upper bound for $c_{s,t}$ arising from the probabilistic method is Lemma \ref{random-construction}. In addition to the probabilistic method, we have access to an explicit construction, first applied to this problem by Conlon and Ferber \cite{ConlonFerber}. The bound obtained this way is as follows.

\begin{lemma}\label{f2-construction} For $t$ even, we have 

\[ c_{s,t} \leq \begin{cases}  O \left( 2^{ - \frac{s (s-1) }{ 2} }\right) & s \leq \frac{t}{2}-1 \\ O \left( 2^{ -\frac{ (4s-t)(t-2)}{8}} \right) & s \geq \frac{t}{2}-1 \end{cases}\]

\end{lemma}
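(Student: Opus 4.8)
The plan is to carry out the explicit Conlon-Ferber construction and reduce the problem to counting tuples of pairwise orthogonal vectors in a symplectic $\mathbb{F}_2$-space.

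First I would set up the graph $G$ with vertex set $V = \{v \in \mathbb{F}_2^{t-1} : \sum_i v_i = 1\}$ (the odd-weight vectors, so $|V| = 2^{t-2}$) and $u \sim v$ precisely when $\langle u,v\rangle := \sum_i u_i v_i = 0$. Since each $v \in V$ has $\langle v,v\rangle = \sum_i v_i = 1$, a clique of $G$ is a set of pairwise orthogonal, non-isotropic vectors, hence linearly independent, so $G$ has no clique of size $t$; therefore $\mathbb{P}(s,t) \le A_s/|V|^s$, where $A_s$ counts the tuples $(v_1,\dots,v_s) \in V^s$ with $\langle v_i,v_j\rangle = 1$ for all $i \ne j$ (this is exactly the event that $\{v_1,\dots,v_s\}$ is independent, repeated entries being harmless since $\langle v_i,v_i\rangle = 1$). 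The key manipulation is the linear change of variables $u_i = \mathbf{1} + v_i$, where $\mathbf{1} = (1,\dots,1)$: using that $t$ is even one gets $\langle \mathbf{1},\mathbf{1}\rangle = 1$, that $v_i$ is odd-weight iff $u_i$ lies in $H := \mathbf{1}^\perp$, and that $\langle v_i,v_j\rangle = 1 + \langle u_i,u_j\rangle$ whenever $u_i,u_j \in H$. Hence $A_s$ equals the number of $s$-tuples of pairwise orthogonal vectors in $H$. Now $\langle\cdot,\cdot\rangle|_H$ is nondegenerate (its radical is $H \cap \langle\mathbf{1}\rangle = 0$, as $\mathbf{1}$ has odd weight and so lies outside $H$) and alternating (every $u \in H$ has even weight), and $\dim H = t-2$ is even, so $H$ is a symplectic $\mathbb{F}_2$-space of dimension $2k$ with $k = t/2 - 1$, and $|V| = 2^{2k}$.

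Next I would bound $A_s$ by grouping tuples according to the subspace $W$ they span. A pairwise orthogonal tuple spans a totally isotropic subspace, whose dimension $d$ is at most $\min(s,k)$ (at most $s$ trivially, at most $k$ since $W \subseteq W^\perp$). Bounding the number of $s$-tuples with a fixed span $W$ by $|W|^s = 2^{ds}$ gives $A_s \le \sum_{d=0}^{\min(s,k)} L_{k,d}\,2^{ds}$, where $L_{k,d}$ is the number of totally isotropic $d$-dimensional subspaces of $H$. Counting ordered linearly independent pairwise orthogonal $d$-tuples yields $L_{k,d} = \prod_{j=0}^{d-1} \frac{2^{2k-j}-2^j}{2^d - 2^j}$; from $\prod_{j=0}^{d-1}(2^{2k-j}-2^j) \le 2^{2kd - \binom{d}{2}}$ and $\prod_{j=0}^{d-1}(2^d - 2^j) = 2^{\binom{d}{2}}\prod_{i=1}^{d}(2^i - 1) \ge c_0\,2^{d^2}$ (with $c_0 = \prod_{i \ge 1}(1 - 2^{-i})$ an absolute positive constant) one gets $L_{k,d} \le c_0^{-1}\,2^{2kd - \binom{d}{2} - d^2}$.

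Finally I would evaluate the resulting sum, whose exponent is $g(d) = 2kd - \binom{d}{2} - d^2 + ds$. A short computation gives $g(d) - g(d-1) = 2k + s + 2 - 3d \ge 2$ for all $d \le \min(s,k)$ (since $3d \le 3\min(s,k) \le s + 2k$), so $g$ increases in steps of size at least $2$ and $\sum_{d} 2^{g(d)} \le \tfrac{4}{3}\,2^{g(\min(s,k))}$, hence $\mathbb{P}(s,t) \le A_s/2^{2ks} = O\!\left(2^{g(\min(s,k)) - 2ks}\right)$. For $s \le t/2 - 1 = k$ the exponent is $g(s) - 2ks = -\binom{s}{2}$, giving the first bound; for $s \ge t/2 - 1 = k$ it is $g(k) - 2ks = \binom{k+1}{2} - ks$, which upon substituting $k = t/2 - 1$ becomes $-(t-2)(4s-t)/8$, giving the second. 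The main obstacle is the reduction in the first paragraph: recognizing that independent tuples of the Conlon-Ferber graph become, after the shift by $\mathbf{1}$, exactly the pairwise orthogonal tuples in the symplectic hyperplane $H$ (this is where $t$ even is used), after which the count is routine; a secondary point to watch is that the polynomial and constant losses in the subspace count really are absorbed by the $O(\cdot)$, which works because $g$ is strictly concave with a gap of at least $2$ at the active endpoint, forcing geometric rather than polynomial growth of the sum.
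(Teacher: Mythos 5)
Your proof is correct and follows essentially the same route as the paper: the Conlon--Ferber graph (realized via the odd-weight vectors shifted by $\mathbf 1$ rather than the abstract symplectic space of dimension $t-2$, but these coincide), clique-freeness via linear independence of the Gram matrix, and reduction of independent tuples to totally isotropic subspaces followed by the standard subspace count. The only cosmetic difference is that you sum over all span dimensions $d$ and show the sum is geometric, whereas the paper extends every independent set into an isotropic subspace of dimension exactly $\min(s,t/2-1)$; both handle the bookkeeping correctly.
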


This is a reformulation of estimates from \cite{ConlonFerber} and \cite{Wigderson}.

A weaker form of the small $s$ case follows by results of \cite{ChungGrahamWilson} from the fact that the graph $G$ described below is quasi-random, which is known by \cite[Lemma 4.1]{PudlakRodlSavicky} or \cite{KrivelevichSudakov}.

\begin{proof} Let $V$ be a vector space of dimension $t-2$ over $\mathbb F_2$ endowed with a nondegenerate symplectic bilinear form $(\cdot,\cdot)\colon V \times V \to \mathbb F_2$. Let $G$ be the graph whose vertex set is $V$ and with an edge between $v_1$ and $v_2$ if and only if $(v_1,v_2)=1$.

Then $G$ contains no clique of size $t$. To prove this, note that $t$ vectors in a clique would have to satisfy $(v_i, v_j) =  1- \delta_{ij}$. Because $t$ is even, the matrix with entries $1-\delta_{ij}$ has rank $t$, and thus $v_1,\dots,v_t$ would have to be linearly independent, contradicting the fact that $V$ does not contain $t$ linearly independent vectors.

Any independent set of $G$ is contained in an isotropic subspace of $V$, since if the form vanishes on a set of vectors then it vanishes on all linear combinations. Since every isotropic subspace of $V$ of dimension $<t/2$ is contained in a subspace of dimension one greater, and every isotropic subspace of dimension $t/2-1$ is maximal, any independent set of size $s$ is contained in an isotropic subspace of dimension $\min (s, t/2-1)$.

If $s \leq  \frac{t}{2} -1 $, note that the number of isotropic subspaces in $V$ of rank $s$ is  \[ \frac{\prod_{i=0}^{s-1} (2^{t-2 -2i } - 1) }{ \prod_{i=0}^{s-1} (2^{s-i} - 1  ) }  \leq O\left( 2^{   \frac{s (2t-3-3s )}{2}} \right ) \]  since in a basis for a maximal isotropic subspace we have $2^{t-2}-1$ choices for the first vector, $2^{t-3}- 2$ choices for the second, $2^{ t-4}-4$ choices for the third, and so on, and the number of isotropic subspace is the number of such bases divided by the number $(2^s-1)  (2^s-2) \dots (2^s - 2^{s-1})$ of bases of a single isotropic subspace.

The probability that $s$ vectors will be contained in a given isotropic subspace of rank $s$ is $2^{ - (t-2-s) s}$, so the total probability that $s$ vectors will form an independent set is $ O\left( 2^{ -  \frac{s (s-1) }{2}} \right) $ if $s\leq \frac{t}{2}-1$.

In particular, the number of $(\frac{t}{2}-1)$-dimensional isotropic subspaces is $O\left( 2^{   \frac{ t (t-2) }{8}} \right)$, and the probability that $s$ vectors will be contained in one is $ 2^{ - (t-2) s/2}$, so the total probability that $s$ vectors will form an independent set is $O \left( 2^{ - (4s-t) (t-2) / 8 } \right)$ if $s \geq \frac{t}{2}-1$.  \end{proof} 

To compare to earlier work, note that the bilinear form $v_i \cdot v_j$ on the space of vectors in $\mathbb F_2^t $ with Hamming weight even is symplectic, but not nondegenerate, since the all-ones vector has zero dot product with any vector. Taking the quotient by this vector produces a vector space of rank $t-2$ with a nondegenerate symplectic bilinear form.

Specializing the random construction of Lemma \ref{random-construction} to $p=1/2$, we obtain \[c_{s,t} \leq 2^{ - \frac{ t (4s-t) }{ 8} + s \log s + O(t)} = 2^{ - \frac{t (4s-t)}{8} + O (s\log(s)+t)} ,\] which matches up to lower-order terms the bound obtained from Lemma \ref{f2-construction}.

In Lemma \ref{random-construction}, the value of $p$ is optimal if it minimizes $\frac{ t (4 s \log(1-p) - t \log(p) )\log(p)  }{8 \log(1-p) } $, which happens if \[  -\frac{4s}{1-p} - \frac{t}{p} + (4s \log(1-p) - t\log(p)) \left( \frac{1}{ p \log(p) } + \frac{ 1}{ (1-p) \log(1-p)}  \right)= 0 .\] For $p=1/2$, this specializes to \[ - 4s -t + 2 (4s-t) = 0\] or $s =3t/4$. 

Thus, when $s = 3t /4 $, the explicit Lemma \ref{f2-construction} gives the same bound as the random Lemma \ref{random-construction} up to lower-order terms. It would be interesting to determine if explicit constructions can match Lemma \ref{random-construction} for other values of $s/t$. It is possible that the graph, also studied by Conlon and Ferber \cite{ConlonFerber}, with vertex set the set of vectors  in $v\in \mathbb F_q^n$ with $v\cdot v=0$ and an edge connecting two vertices $v,w$ if $v\cdot w=1$, for a finite field $\mathbb F_q$, would give the same bound as Lemma \ref{random-construction} for $p=1/q$.

The exact optimum values of $c_{s,t}$ are known for many small $s,t$. In most known cases, the optimal value is attained by a graph which also optimizes some Ramsey number problem. We review some known results here: 

\begin{itemize}

\item We have $c_{2,t} =\frac{1}{t-1}$, with the optimum attained by the complete graph $K_{t-1}$. This follows immediately from Tur\'an's theorem. 
\item We have $c_{3,3} = \frac{1}{4}$, with the optimum attained by the complete graph $K_2$ \cite{Goodman}. 
\item We have $c_{3,t}=\frac{1}{ (t-1)^2}$, with the optimum attained by the complete graph $K_{t-1}$, for $4 \leq t \leq 7$ \cite{PikhurkoVaughan} (with the case $t=4$ discovered independently by \cite{DHMNS}). However, this is known not to hold for $t$ large enough \cite[Corollary 2.6]{Nikiforov}.
\item We have $c_{4,3}= \frac{3}{25}$, with the optimum attained by the five-cycle (independently due to \cite{PikhurkoVaughan} and \cite{DHMNS}).
\item We have $c_{5,3}=31/625$, with the optimum again attained by the five-cycle \cite{PikhurkoVaughan}.
\item We have $c_{6,3}=19211/2^{20}$ and $c_{7,3}=98491/2^{24}$, with the optimum attained by the (5-regular) Clebsch graph \cite{PikhurkoVaughan}.
\end{itemize}

Except for the first two, these results were obtained using the method of flag algebras introduced by Razborov \cite{Razborov}. If these methods could be extended to more values of $s,t$, it would be interesting to see to what extent the optimal graphs have algebraic structure, which could suggest non-random constructions for large $s,t$ which generalize Lemma \ref{f2-construction}.

\bibliographystyle{plain}

\bibliography{references}

\end{document}